\newtheorem{theorem}{Theorem}[section]
\newtheorem{proposition}[theorem]{Proposition}
\newtheorem{corollary}[theorem]{Corollary}
\newtheorem{lemma}[theorem]{Lemma}
\theoremstyle{definition}
\newtheorem{definition}[theorem]{Definition}
\newtheorem{remark}[theorem]{Remark}
\newtheorem{example}[theorem]{Example}
\begin{document}
\title[Admissible Banach function spaces and  nonuniform
stabilities]{Admissible Banach function spaces and  nonuniform stabilities}
\author[N. Lupa]{Nicolae Lupa}
\address{N. Lupa, Department of Mathematics, Politehnica University of Timi\c soara,
Victoriei Square 2, 300006 Timi\c soara, Romania}
\email{nicolae.lupa@upt.ro}
\author[L.H. Popescu]{Liviu Horia Popescu}
\address{L. H. Popescu, Department of Mathematics and Informatics, Faculty of Sciences,
University of Oradea, Universit\u a\c tii St. 1, 410087 Oradea, Romania}
\email{lpopescu2002@yahoo.com}

\begin{abstract}
For nonuniform  exponentially bounded evolution families defined on Banach spaces,
we introduce a class of  Banach function spaces, whose norms are  completely
determined by the nonuniform behaviour of the corresponding  evolution
family.
We generalize a classical theorem of Datko on  these spaces. In addition,
we obtain new criteria for the existence of nonuniform stability.
\end{abstract}

\subjclass[2010]{34D20, 37D25}
\keywords{Nonuniform stabilities, admissible exponents, Datko's  theorem}
\maketitle

\section{Introduction}

In a recent paper, the authors introduced a special class of  Banach function spaces in order to characterize the concept of nonuniform exponential stability for  evolution families in terms of invertibility of the infinitesimal generators of certain $C_0$-semigroups  \cite{Lu.Po}.
In fact, the results obtained in \cite{Lu.Po} prove the necessity
of the study of nonuniform behaviour of an evolution family for each individual
admissible exponent, requiring substantial departure from well-known ideas.

The aim of this paper is twofold. On the one hand, we try to give some new criteria for the existence of nonuniform stability, which is the case when 0 is an admissible exponent. So, we define some appropriate  projections and  analyse their invariance in respect to  a class of admissible Banach function spaces. On the other hand, motivated by a recent work of Dragi\v{c}evi\'{c} \cite{Dr2},  we intend to generalize a classical result of Datko on these spaces. The reader will surely notice that our techniques are of a completely different type from those in \cite{Dr2}, where the results in the case of continuous time are obtained by reducing the dynamics to the case of discrete time (see also Remark \ref{rem2}).

A notable result in the stability theory of ODEs lies in Datko's theorem.
In order to extend a classical theorem of  Lyapunov to strongly continuous
semigroups of operators in Hilbert spaces,  Datko proved that if
$\{T(t)\}_{t\geq0}$ is a $C_{0}$-semigroup on a complex Hilbert space $X$ such
that $\int_{0}^{\infty} \parallel T(t)x\parallel^{2} dt<\infty$ for some $x\in
X$, then $\lim\limits_{t\to\infty} T(t)x=0$ (Lemma 3 in \cite{Da.1970}).
Later, using different techniques, Pazy showed that if for some
$p\in[1,\infty)$, the integral $\int_{0}^{\infty} \parallel T(t)x\parallel^{p}
dt$ is finite for every $x\in X$, then $\{T(t)\}_{t\geq0}$ is uniform
exponentially stable, i.e. there exist $M,\alpha>0$ such that $\parallel
T(t)\parallel\leq M e^{-\alpha t}$, $t\geq0$ (see \cite{Pa.1972} and
\cite[Theorem 4.1, pp. 116]{Pa.1983}). In 1972 Datko generalized this
result for evolution families on Banach spaces \cite{Da.1972}. It is
shown that a uniform exponentially bounded evolution family $\left\{
U(t,s)\right\}_{t\geq s\geq0}$ on a Banach space $X$ is uniform
exponentially stable if and only if there exists $p\in\lbrack1,\infty)$ such
that
\[
\sup\limits_{t\geq0}\int_{t}^{\infty}\parallel U(\xi,t)x\parallel^{p}%
d\xi<\infty,\;x\in X.
\]

Ichikawa obtained in \cite{Ic.1984} a Datko-type theorem for families of
nonlinear operators $U(t,s):X_{s}\rightarrow X_{t}$, $t\geq s\geq0$, which satisfy the following conditions:

\begin{enumerate}
\item[($e_{1}$)] $U(t,t)x=x$, $x\in X_{t}$;

\item[($e_{2}$)] $U(t,\tau)U(\tau,s)=U(t,s)$ on $X_{s}$, $t\geq\tau\geq s$;

\item[($e_{3}$)] the map $t\mapsto U(t,s)x$ is continuous on $[s,\infty)$ for
every $x\in X_{s}$.
\end{enumerate}
In fact, Ichikawa proved that if there exists a continuous function
$g:\mathbb{R}_{+}\rightarrow(0,\infty)$ such that $\parallel U(t,s)x\parallel
\leq g(t-s)\parallel x\parallel$, for  $x\in X_{s}$ and  $t\geq s\geq0$, then
$\{U(t,s)\}_{t\geq s\geq0}$ is uniform exponentially stable if and only if
there exist  $p,K>0$ such that
\[
\int_{t}^{\infty}\parallel U(\xi,t)x\parallel^{p}d\xi\leq K \parallel
x\parallel^{p},\;x\in X_{t},\,t\geq0.
\]
Using different techniques, this result has been generalized for weak
exponential stability in \cite{Lu.Me.Po}. An interesting Datko-type theorem
was obtained by  Preda and  Megan for uniform exponential dichotomy
\cite{Pr.Me.1985}. Some nonlinear versions have been considered by  Sasu and
 Sasu for both exponential dichotomy \cite{Sa.2010-1} and exponential
trichotomy \cite{Sa.Sa.2011}.

It is worth to mention that the claim in Datko's theorem can be obtained as a
consequence of some results in the evolution semigroup theory (ex.
\cite[Theorem 3.23, pp. 74-75]{Ch.La.1999}).

In the last decade the research in the
field of evolution families developed in a more general direction, which is
the nonuniform behaviour.  A major motivation for
considering this type of behaviour lies in the ergodic theory (for more details we refer the reader to
\cite{Ba.Pe,Ba.Va.2008-1}).
The most recent extensions of Datko's theorem deal with the nonuniform behaviour
(see, for instance, papers
\cite{Be.Lu.Me.Si,Dr1,Dr2,Lu.Me.2014,Pr.Pr.Cr.2012}). We
mention in particular a recent and interesting work of Dragi\v{c}evi\'{c} \cite{Dr2},
where the author obtains some Datko-type characterizations of strong
nonuniform exponential behaviour (contractions, expansions and dichotomies), considering both the case of discrete time as
well as the case of continuous time. For example (see Theorem 9 in \cite{Dr2}), Dragi\v{c}evi\'{c} proves that an invertible evolution family
$\left\{U(t,s)\right\}_{t, s\geq0}$ on a Banach space $X$  admits a {strong nonuniform exponential contraction} if and only if there exists a family $\left\{\|\cdot\|_{t}\right\}_{t\geq 0}$ of norms in $X$ such that:
\begin{enumerate}
\item there exist $C>0$ and $\varepsilon\geq 0$ such that
$$\|x\|\leq \|x\|_t\leq C e^{\varepsilon t} \|x\|, \text{ for every } t\geq 0 \text{ and } x\in X;$$
\item the map $t\mapsto \|x\|_t$ is measurable on $[0,\infty)$ for each $x\in X$;
\item there exist $M, a>0$ such that
$$\|U(t,s)x\|_t\leq M e^{a |t-s|} \|x\|_s, \text{ for } t,s\geq 0 \text{ and } x\in X;$$
\item there exist $p,K>0$  such that
$$\int_{t_0}^\infty \| U(t,t_0)x\|^p_t \,dt\leq K \|x\|^p_{t_0}, \text{ for all } t_0\geq 0 \text{ and } x\in X.$$
\end{enumerate}

In this paper we introduce a family of projections on the space of all
continuous functions $u:\mathbb{R}_{+}\rightarrow X$ and connect the existence of nonuniform (exponential) stability to the restriction of these projections on a class of admissible Banach function spaces, generalizing Datko's theorem on these spaces.

\section{Preliminaries}

\label{sec.preliminaries}

In this section we set up notations and terminology, and  present some auxiliary results  needed in the subsequent part of the paper.

Throughout our paper  $X$ is a Banach space and  $\mathcal{L}(X)$  denotes the Banach algebra of all bounded linear operators on $X$.
Furthermore, $C(\mathbb{R}_{+},X)$ is the space of all
continuous functions $u:\mathbb{R}_{+}\rightarrow X$, $C_{b}(\mathbb{R}%
_{+},X)$ denotes the subspace of the bounded maps in $C(\mathbb{R}_{+},X)$, and
$C_{c}(\mathbb{R}_{+},X)$ stands for the set of all maps in $C(\mathbb{R}_{+},X)$ with
compact support.

\begin{definition}
\label{d.ev} A family $\mathcal{U}=\left\{  U(t,s)\right\}  _{t\geq s\geq0}$
of operators in $\mathcal{L}(X)$ is called an \emph{evolution family} if the
following conditions hold:

\begin{enumerate}
\item $U(t,t)=\mathrm{Id}$, for $t\geq0$;

\item $U(t,\tau)U(\tau,s)=U(t,s)$, for $t\geq\tau\geq s\geq0$;

\item the map $(t,s)\mapsto U(t,s)x$ is continuous on $\{(t,s)\in
\mathbb{R}_{+}^{2}:\,t\geq s\}$ for each $x\in X$.
\end{enumerate}
\end{definition}

\begin{definition}
\label{d.exp.bound} Let $\alpha\in\mathbb{R}$. An evolution family
$\mathcal{U}$ is said to be \emph{$\alpha$-nonuniform exponentially bounded}
if there exists a continuous function $M:\mathbb{R}_{+} \rightarrow(0,\infty)$
such that
\begin{equation}
\parallel U(t,s)\parallel\leq M(s)e^{\alpha(t-s)}, \quad t\geq s\geq0.
\label{eq.exp.bound}%
\end{equation}

If inequality \eqref{eq.exp.bound} holds for some $\alpha<0$, then the
evolution family $\mathcal{U}$ is called \emph{$\alpha$-nonuniform
exponentially stable}. Each $\alpha$ satisfying \eqref{eq.exp.bound} is called
an \emph{admissible exponent} for $\mathcal{U}$, and $\mathcal{A}%
(\mathcal{U})$ denotes the set of all admissible exponents for $\mathcal{U}$.
\end{definition}

Evidently, if $\alpha\in\mathcal{A}(\mathcal{U})$ and $\beta\geq \alpha$, then $\beta\in\mathcal{A}(\mathcal{U})$ and thus the set $\mathcal{A}(\mathcal{U})$ is either a semi-infinite interval, or empty.

An evolution family $\mathcal{U}$ is called \emph{nonuniform exponentially bounded} if $\mathcal{A}(\mathcal{U})\neq\emptyset$. In particular,
if $0\in\mathcal{A}\left(  \mathcal{U}\right) $,
$\mathcal{U}$ is called \emph{nonuniformly stable},
and if $\mathcal{A}(\mathcal{U})$
contains negative admissible exponents, $\mathcal{U}$ is said to be
\emph{nonuniform exponentially stable}.

\medskip

In the above-mentioned terminology, whenever
there exists a \emph{bounded}  function $M:\mathbb{R}_{+} \rightarrow(0,\infty)$
satisfying \eqref{eq.exp.bound}, which is equivalent to the existence of a
positive constant $M>0$ such that
\[
\parallel U(t,s)\parallel\leq Me^{\alpha(t-s)}, \; t\geq s\geq0,
\]
we just replace the term \textquotedblleft nonuniform\textquotedblright\ with
\textquotedblleft uniform\textquotedblright.

\medskip

Throughout our paper we assume that $\mathcal{A}(\mathcal{U})\neq\emptyset$, that is the evolution family
$\mathcal{U}$ is nonuniform exponentially bounded.

For $\alpha\in
\mathcal{A}(\mathcal{U})$, $t\geq0$ and $u\in{C}(\mathbb{R}_{+},X)$, we set
\begin{equation}
\varphi_{\alpha}(t,u)=\sup_{\tau\geq t}e^{-\alpha(\tau-t)}\parallel
U(\tau,t)u(t)\parallel. \label{eq.fi}%
\end{equation}

It is easily seen that
\begin{equation}
\parallel u(t)\parallel\leq\varphi_{\alpha}(t,u)\leq M(t)\parallel
u(t)\parallel. \label{eq.init}%
\end{equation}

The following result can be found in  \cite{Lu.Po} and  assures the
existence of the integrals in the next section.

\begin{proposition}
\label{p2} Let $\alpha\in\mathcal{A}(\mathcal{U})$. The function
$\mathbb{R}_{+}\ni t\mapsto\varphi_{\alpha}(t,u)\in\mathbb{R}_{+}$ is
continuous for every $u\in{C}(\mathbb{R}_{+},X)$.
\end{proposition}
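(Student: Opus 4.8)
The plan is to exploit the reformulation $\varphi_\alpha(t,u)=\Vert u(t)\Vert_t$, where $\Vert x\Vert_t=\sup_{\tau\ge t}e^{-\alpha(\tau-t)}\Vert U(\tau,t)x\Vert$ is the time-$t$ norm recalled just before the statement. First I would record that, for each fixed $t$, the map $x\mapsto\Vert x\Vert_t$ is a genuine norm on $X$ (homogeneity and subadditivity pass through the supremum), satisfying $\Vert x\Vert\le\Vert x\Vert_t\le M(t)\Vert x\Vert$ as in \eqref{eq.init}. This lets me split, for $t$ near a fixed $t_0$,
\[
|\varphi_\alpha(t,u)-\varphi_\alpha(t_0,u)|\le\bigl|\Vert u(t)\Vert_t-\Vert u(t_0)\Vert_t\bigr|+\bigl|\Vert u(t_0)\Vert_t-\Vert u(t_0)\Vert_{t_0}\bigr|,
\]
and bound the first term by $\Vert u(t)-u(t_0)\Vert_t\le M(t)\Vert u(t)-u(t_0)\Vert$, which tends to $0$ since $M$ is continuous (hence locally bounded) and $u$ is continuous. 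The problem is thereby reduced to showing that, for a \emph{fixed} vector $x\in X$, the scalar function $g(t)=\Vert x\Vert_t$ is continuous.

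For this reduced problem I would treat lower and upper semicontinuity separately. Lower semicontinuity is immediate: for each fixed $h\ge0$ the map $t\mapsto e^{-\alpha h}\Vert U(t+h,t)x\Vert$ is continuous by property (iii) of the evolution family and is dominated by $g(t)$, so $\liminf_{t\to t_0}g(t)\ge\sup_{h\ge0}e^{-\alpha h}\Vert U(t_0+h,t_0)x\Vert=g(t_0)$. The crux is upper semicontinuity: here the obstruction is that the supremum defining $g$ runs over the \emph{non-compact} set $\{\tau\ge t\}$ and the summand need not decay in $\tau$, so a naive compactness argument fails. The device that resolves this is the cocycle identity valid for $s\le t$,
\[
\Vert x\Vert_s=\max\left(\sup_{s\le\tau\le t}e^{-\alpha(\tau-s)}\Vert U(\tau,s)x\Vert,\;e^{-\alpha(t-s)}\Vert U(t,s)x\Vert_t\right),
\]
obtained by splitting the supremum at $\tau=t$ and rewriting the tail $\tau\ge t$ via $U(\tau,s)=U(\tau,t)U(t,s)$.

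Applying this identity with $s=t_0$ and $t\downarrow t_0$ gives $g(t_0)\ge e^{-\alpha(t-t_0)}\Vert U(t,t_0)x\Vert_t$; since $\bigl|\Vert U(t,t_0)x\Vert_t-g(t)\bigr|\le M(t)\Vert U(t,t_0)x-x\Vert\to0$ (reverse triangle inequality for $\Vert\cdot\Vert_t$ together with continuity $U(t,t_0)x\to x$), I obtain $\limsup_{t\to t_0^+}g(t)\le g(t_0)$. For the left limit I apply the identity with $s=t<t_0$ and later time $t_0$: the tail term $e^{-\alpha(t_0-t)}\Vert U(t_0,t)x\Vert_{t_0}$ converges to $g(t_0)$ by continuity of the fixed norm $\Vert\cdot\Vert_{t_0}$ and $U(t_0,t)x\to x$, while the remaining supremum over the shrinking interval $[t,t_0]$ tends to $\Vert x\Vert\le g(t_0)$ by the joint continuity of $(\tau,t)\mapsto e^{-\alpha(\tau-t)}\Vert U(\tau,t)x\Vert$ near $(t_0,t_0)$, giving $\limsup_{t\to t_0^-}g(t)\le g(t_0)$. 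Combined with lower semicontinuity this yields $\lim_{t\to t_0}g(t)=g(t_0)$, and the reduction above completes the proof. The main obstacle throughout is taming the non-compact tail of the supremum, which is precisely what the cocycle identity is engineered to do.
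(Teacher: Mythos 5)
Your proof is correct, and while it rests on the same essential ingredients as the paper's, it is organized along a genuinely different route. The paper runs a single $\varepsilon$--$\delta$ argument directly on $\varphi_\alpha(\cdot,u)$: it introduces $V(t,s)=e^{-\alpha(t-s)}U(t,s)$, picks one $\delta$ making four smallness conditions hold simultaneously, and then estimates $\varphi_\alpha(t,u)-\varphi_\alpha(t_0,u)$ through a case analysis ($t\ge t_0$, then $t<t_0$ with the supremum split at $\tau=t_0$), inserting terms like $V(\tau,t_0)u(t_0)$ via the cocycle property. You instead first reduce to a constant vector, bounding $|\varphi_\alpha(t,u)-\varphi_\alpha(t_0,u)|$ by $M(t)\Vert u(t)-u(t_0)\Vert$ plus the oscillation of $g(t)=\Vert u(t_0)\Vert_t$, and then prove continuity of $g$ by splitting into semicontinuities: lower semicontinuity is automatic because $g$ is a supremum of functions of $t$ that are continuous by property (iii) of Definition \ref{d.ev}, while upper semicontinuity follows from your max-splitting identity --- which is precisely the cocycle property packaged as an exact identity for the supremum, rather than used inequality-by-inequality as in the paper. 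The core estimates coincide in both proofs (splitting the tail of the supremum at $\max(t,t_0)$ via $U(\tau,s)=U(\tau,t)U(t,s)$, the bound $\Vert y\Vert_t\le M(t)\Vert y\Vert$ from \eqref{eq.init}, and continuity of $u$ and of orbits), so the difference is architectural: your version is more modular and conceptual, isolating the one genuine obstacle (the non-compact range of $\tau$, i.e.\ upper semicontinuity) and dispatching everything else abstractly through the norm structure of $\Vert\cdot\Vert_t$, whereas the paper's version is a self-contained computation that handles a general $u$ in one sweep with explicit, uniform choices of $\delta$, and never needs to discuss one-sided limits or semicontinuity.
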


For each $\alpha\in\mathcal{A}\left(  \mathcal{U}\right)  $ we consider the set
\[
\mathcal{C}_{b}(\mathcal{U},\alpha)=\left\{  u\in C(\mathbb{R}_{+},X):\,\sup\limits_{t\geq
0}\varphi_{\alpha}(t,u)<\infty\right\}  .
\]

Since $C_{c}(\mathbb{R}_{+},X)\subset\mathcal{C}_{b}(\mathcal{U},\alpha)$, it follows that this set is nonempty. Moreover, it is easy to check that
$\mathcal{C}_{b}(\mathcal{U},\alpha)$ is a Banach space equipped with the norm
\[
\parallel u\parallel_{\alpha}=\sup\limits_{t\geq0}\text{ }\varphi_{\alpha
}(t,u),
\]
and we call it the \emph{admissible Banach function space} corresponding to
the evolution family $\mathcal{U}$
and the admissible exponent $\alpha\in\mathcal{A}\left(  \mathcal{U}\right)  $.

From (\ref{eq.init}) we have $\mathcal{C}_{b}(\mathcal{U},\alpha)\subset
C_{b}(\mathbb{R}_{+},X).$ Furthermore, following the same line as in the proof
of Proposition 2.6 in \cite{Lu.Po} we deduce that \emph{the Banach spaces
$\mathcal{C}_{b}(\mathcal{U},\alpha)$ and $C_{b}(\mathbb{R}_{+},X)$ coincide if and only if
$\mathcal{U}$ is $\alpha$-uniform exponentially bounded}.

The following example is taken from \cite{Lu.Po}, and proves the necessity of
the study of nonuniform behaviour of an evolution family $\mathcal{U}$ for each
individual admissible exponent $\alpha\in\mathcal{A}(\mathcal{U})$.

\begin{example}
\label{ex1}For $t\geq s>0$ we set
\[
f\left(  t,s\right)  =s\left(  \sqrt{2}+\sin\ln s\right)  -t\left(  \sqrt
{2}+\sin\ln t\right)  ,
\]
$f(t,0)=-t\left(  \sqrt{2}+\sin\ln t\right)  $ for $t>0$, and $f(0,0)=0$.

The
evolution family $\mathcal{U}=\left\{  U(t,s)\right\}_{t\geq s\geq0}$, where $U\left(  t,s\right)  =e^{f\left(  t,s\right)  }{Id},$
has the following properties:

\begin{enumerate}
\item $\mathcal{A}\left(  \mathcal{U}\right)  =\left[  1-\sqrt{2}%
,\infty\right)  $;

\item If $\alpha\geq 0$, then  $\mathcal{U}$ is $\alpha$-uniform exponentially bounded, and hence $$\mathcal{C}_{b}(\mathcal{U},\alpha)=C_{b}(\mathbb{R}_{+},X)$$ whenever
$\alpha\geq0$;

\item $\mathcal{U}$ is not $\alpha$-uniform exponentially bounded, if
$\alpha\in\left[  1-\sqrt{2},0\right)  $;

\item $\mathcal{C}_{b}(\mathcal{U},\alpha)\neq\mathcal{C}_{b}(\mathcal{U},\beta)$, for all different
$\alpha,\beta\in\lbrack1-\sqrt{2},0)$.
\end{enumerate}
\end{example}

\medskip

For any fixed $t\geq0$ and $u\in C(\mathbb{R}_{+},X)$ we  consider a map
$\Phi (t)u:\mathbb{R}_{+}\rightarrow X$, defined by
\[
\left(  \Phi (t) u\right)  (\xi)=%
\begin{cases}
U(\xi,t)u(t), & \xi\geq t,\\
u(\xi), & 0\leq\xi<t.
\end{cases}
\]

Obviously, the operator $\Phi (t): C(\mathbb{R}_{+},X) \to C(\mathbb{R}_{+},X)$ is well-defined, and for all $t\geq0$ and $u\in
C(\mathbb{R}_{+},X)$ we have
\begin{equation}
\varphi_{\alpha}(t,\Phi(t)u)=\varphi_{\alpha}(t,u). \label{eq.1}%
\end{equation}

\begin{lemma}
\label{p.proiector} One has
\begin{equation}
\Phi(t)\Phi(s)=\Phi(s)\Phi(t)=\Phi(s),  \text{ for all } t\geq s\geq 0.\label{eq.pre.D}%
\end{equation}
In particular,  $\Phi(t)$  is a projection on $C(\mathbb{R}_{+},X)$ for each $t\geq0$.
\end{lemma}

\begin{proof}
Let $t\geq s\geq0$ and $u\in C(\mathbb{R}_{+},X)$. For $\tau\geq t$ we get
\begin{align*}
\left(  \Phi(t)(\Phi(s)u)\right)  (\tau)  &  =U(\tau,t)(\Phi(s)u)(t)=U(\tau
,t)U(t,s)u(s)\\
&  =U(\tau,s)u(s)=(\Phi(s)u)(\tau).
\end{align*}
On the other hand, if $\tau<t$ it follows that $\left(  \Phi(t)(\Phi(s)u)\right)  (\tau
)=(\Phi(s)u)(\tau).$ The above identities imply $\Phi(t)\Phi(s)=\Phi(s)$.
The second identity in \eqref{eq.pre.D} can be easily proved using the same
type of arguments and we leave it to the reader. Setting  $t=s$ in \eqref{eq.pre.D}, we have that $\Phi
^{2}(t)=\Phi(t)$, that is $\Phi(t)$ is a projection on $C(\mathbb{R}_{+},X)$.

\end{proof}

\begin{lemma}
\label{lem.Datko} Let $\alpha\in\mathcal{A}\left(  \mathcal{U}\right)  $. For
each $u\in C(\mathbb{R}_{+},X)$ one gets
\begin{equation}
\varphi_{\alpha}(t,\Phi(s)u)\leq e^{\alpha(t-\xi)}\varphi_{\alpha}(\xi
,\Phi(s)u), \text{ for all }t\geq\xi\geq s\geq0. \label{eq.D}%
\end{equation}
In particular,
\begin{equation}
\varphi_{\alpha}(t,\Phi(s)u)\leq e^{\alpha(t-s)}\varphi_{\alpha}(s,u),\text{ for all } t\geq
s\geq0. \label{eq.D.part}%
\end{equation}
\end{lemma}

\begin{proof}
For $t\geq\xi\geq s\geq0$ we have
\begin{align*}
\varphi_{\alpha}(t,\Phi(s)u)  &  =\sup\limits_{\tau\geq t}e^{-\alpha(\tau
-t)}\parallel U(\tau,t)(\Phi(s)u)(t)\parallel\\
&  =e^{\alpha(t-\xi)}\sup\limits_{\tau\geq t}e^{-\alpha(\tau-\xi)}\parallel
U(\tau,s)u(s)\parallel\\
&  \leq e^{\alpha(t-\xi)}\sup\limits_{\tau\geq\xi}e^{-\alpha(\tau-\xi
)}\parallel U(\tau,\xi)(\Phi(s)u)(\xi)\parallel\\
&  =e^{\alpha(t-\xi)}\varphi_{\alpha}(\xi,\Phi(s)u).
\end{align*}
Setting $\xi=s$ in \eqref{eq.D} and using \eqref{eq.1}, we deduce the
inequality \eqref{eq.D.part}.

\end{proof}

\section{The main results}

This section is devoted to the study of nonuniform stability and nonuniform exponential stability for a nonuniform exponentially bounded  evolution family.

\subsection{Nonuniform stability}

The next theorem gives a complete  characterization of nonuniform stability in terms of the invariance of each projection $\Phi(t)$ in respect to an  admissible Banach function space $\mathcal{C}_{b}(\mathcal{U},\alpha)$, for some admissible exponent $\alpha$.

\begin{theorem}
\label{prop.admisible.nes}   The following conditions are equivalent:
\begin{enumerate}
\item[(i)] $\mathcal{U}$ is nonuniformly stable;

\item[(ii)] For some $\alpha\in\mathcal{A}\left(  \mathcal{U}\right)  $,  one has
\[
\Phi(t)\mathcal{C}_{b}(\mathcal{U},\alpha)\subset\mathcal{C}_{b}(\mathcal{U},\alpha), \text{ for every } t\geq 0;
\]
\item[(iii)] There exists  $\alpha\in\mathcal{A}(\mathcal{U})$ such that
\[
\Phi(t) {C}_{c}(\mathbb{R}_+,X)\subset \mathcal{C}_{b}(\mathcal{U},\alpha), \text{ for every } t\geq 0.
\]
\end{enumerate}
\end{theorem}

\begin{proof}
(i)  $\Rightarrow$ (ii): Fix $t\geq0$ and let $u\in \mathcal{C}_{b}(\mathcal{U},0)$.
If $s\in[0,t)$, then
\begin{equation*}\label{a1}
\varphi_{0}(s,\Phi(t)u)
=\sup\limits_{\xi\geq s}\| U(\xi,s)(\Phi(t)u)(s)\|
=\sup\limits_{\xi\geq s}\| U(\xi,s)u(s)\|=\varphi_{0}%
(s,u)\leq\left\Vert u\right\Vert _{0}.
\end{equation*}
On the other hand, if $s\geq t$, from \eqref{eq.D.part} one gets
\begin{equation*}\label{a2}
\varphi_{0}(s,\Phi(t)u)\leq \varphi_{0}(t,u)\leq\left\Vert
u\right\Vert _{0}.
\end{equation*}
We conclude that
\[
\|\Phi(t)u\|_0=\sup\limits_{s\geq0}\varphi_{0}(s,\Phi(t)u)\leq\left\Vert u\right\Vert _{0},
\]
that is $\Phi(t)u\in\mathcal{C}_{b}(\mathcal{U},0)$, and thus (ii) holds for
$\alpha=0$. The implication (ii) $\Rightarrow$ (iii) is trivial.\\
(iii) $ \Rightarrow$ (i):  Assume that (iii) holds, i.e. there exists $\alpha\in\mathcal{A}\left(  \mathcal{U}\right)  $ such that for every $t\geq 0$ we have
$$\Phi(t)u\in\mathcal{C}_{b}(\mathcal{U},\alpha),\text{ for all } u\in{C}_{c}(\mathbb{R}_+,X).$$
It suffices to assume that $\alpha>0$.
For any fixed  $s\geq0$ and $x\in X$, we pick a map $u=u_{s,x}\in C_{c}%
(\mathbb{R}_{+},X)$ such that $u\left(  s\right)  =x$. For  $t\geq s$ one
gets
\begin{align*}
\varphi_{\alpha}(t,\Phi(s)u)
&=\sup\limits_{\xi\geq t}e^{-\alpha(\xi-t)}\parallel U(\xi,t)(\Phi(s)u)(t)\parallel\\
&=\sup\limits_{\xi\geq t}e^{-\alpha(\xi-t)}\parallel U(\xi,t)U(t,s)u(s)\parallel\\
&=\sup\limits_{\xi\geq t}e^{-\alpha(\xi-t)}\parallel U(\xi,s)x\parallel.
\end{align*}
Since $\Phi(s)u\in \mathcal{C}_{b}(\mathcal{U},\alpha)$, it follows that
$$e^{-\alpha(\xi-t)}\parallel U(\xi,s)x\parallel\leq \varphi_\alpha(t,\Phi(s)u)\leq \|\Phi(s)u\|_\alpha<\infty,$$
for all  $(\xi,t)\in\Delta_s=\{(\xi,t):\,\xi\geq t\geq s\}.$
Uniform boundedness principle implies that  $$K_\alpha(s)=\sup\limits_{(\xi,t)\in\Delta_s}e^{-\alpha(\xi-t)}\parallel U(\xi,s)\parallel<\infty.$$
Since $(t,t)\in\Delta_s$, we deduce that
\[
\left\Vert U\left(  t,s\right)  \right\Vert \leq K_{\alpha}\left(  s\right),\text{ for all }t\geq s,
\]
that is $0\in\mathcal{A}\left(  \mathcal{U}\right)$.

\end{proof}

To the best of authors knowledge the above result provides a new criterion for the existence of nonuniform stability.

\begin{remark}\rm
Since $\Phi(t)$ is a projection on $C(\mathbb{R}_+,X)$ for each $t\geq 0$, the preceding theorem implies that  an  evolution family $\mathcal{U}$ is nonuniformly stable if and only if there exists $\alpha\in\mathcal{A}\left(  \mathcal{U}\right)  $ such that the restriction of the operator $\Phi(t)$ on  $\mathcal{C}_{b}(\mathcal{U},\alpha)$ is a projection on
$\mathcal{C}_{b}(\mathcal{U},\alpha)$ for each  $t\geq 0$.
\end{remark}

For any fixed  $\tau\geq0$ we consider  the integral equation%

\begin{equation}
u\left(  t\right)  =U\left(  t,s\right)  u\left(  s\right)  + \int_s^t U\left(  t,\xi\right)  f\left(  \xi\right)  d\xi,\quad t\geq
s\geq\tau.\label{eq.tau}%
\end{equation}

Closely following the arguments in Lemma 1.3 from
\cite{Mi.Ra.Sc.}, one can  prove the next result:

\begin{lemma}
\label{lemA}
The operators
$$A_{\alpha,\tau}:D\left(  A_{\alpha,\tau
}\right)  \subset\mathcal{C}_{b}(\mathcal{U},\alpha)\rightarrow\mathcal{C}_{b}(\mathcal{U},\alpha),\quad
A_{\alpha,\tau}u=f,$$
where
$D\left(  A_{\alpha,\tau}\right)$ is the set of all $u\in\mathcal{C}_{b}(\mathcal{U},\alpha)$ such that there exists $f\in \mathcal{C}_{b}(\mathcal{U},\alpha)$ with $f(t)=0$, for all $t\in[0,\tau)$, and  $u,f$ satisfy  \eqref{eq.tau},
are well-defined and closed in
the topology of $\mathcal{C}_{b}(\mathcal{U},\alpha)$.
\end{lemma}

The above construction is inspired by the definition of the operator $I_{X}$ in
\cite{Mi.Ra.Sc.}.

For any fixed $\alpha\in \mathcal{A}(\mathcal{U})$   and $\tau\geq 0$, the following equivalences hold:
\begin{align*}
A_{\alpha,\tau}u=0
&\Leftrightarrow  u\in\mathcal{C}_{b}(\mathcal{U},\alpha) \text{ and } u\left(  t\right)=U\left(  t,s\right)  u\left(  s\right), \text{ for all }t\geq s\geq\tau\\
&\Leftrightarrow  u\in\mathcal{C}_{b}(\mathcal{U},\alpha) \text{ and } u\left(  \xi\right)=U\left( \xi,\tau\right)  u\left(  \tau\right), \text{ for all }\xi\geq\tau\\
&\Leftrightarrow  u\in\mathcal{C}_{b}(\mathcal{U},\alpha) \text{ and }\Phi(\tau)u=u,
\end{align*}
that is%
\[
\ker A_{\alpha,\tau}=\Phi(\tau)\mathcal{C}_{b}(\mathcal{U},\alpha)\cap\mathcal{C}_{b}(\mathcal{U},\alpha).
\]

Theorem \ref{prop.admisible.nes} and the above considerations
furnish a new necessary and sufficient condition for the existence of
nonuniform stability:

\begin{corollary}
\label{corA}
An evolution family $\mathcal{U}$ is nonuniformly stable if and only if
there exists $\alpha\in\mathcal{A}\left(\mathcal{U}\right)$ such that
\[
\ker A_{\alpha,\tau}=\Phi(\tau)\mathcal{C}_{b}(\mathcal{U},\alpha), \text{ for every } \tau\geq0.
\]
\end{corollary}

\subsection{Nonuniform exponential stability}

The following result gives a necessary condition for the existence of $\alpha$-nonuniform exponential stability.

\begin{proposition}
\label{th.Datko.nec} If an evolution family $\mathcal{U}$ is $\alpha
$-nonuniform exponentially stable, then for each $p\in(0,\infty)$ there exists $K>0$ such
that
\begin{equation}
\int_{t}^{\infty}\varphi_{\alpha}^{p}(\xi,\Phi(t)u)d\xi\leq K\varphi_{\alpha
}^{p}(t,u),\text{ for all }t\geq0\text{ and }u\in C(\mathbb{R}_{+},X).\label{eq.Datko}%
\end{equation}
\end{proposition}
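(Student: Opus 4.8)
The plan is to obtain the integral estimate directly from the pointwise exponential decay encoded in Lemma \ref{lem.Datko}, using that the members of $\Phi(t)\mathcal{C}_\alpha$ are exactly the fixed points of the projection $\Phi(t)$. Fix $t\geq 0$ and $u\in\Phi(t)\mathcal{C}_\alpha$. Since $\Phi(t)$ is a projection on $C(\mathbb{R}_+,X)$, every element of its range satisfies $\Phi(t)u=u$; this is precisely why the statement restricts attention to $\Phi(t)\mathcal{C}_\alpha$ rather than all of $\mathcal{C}_\alpha$, since for a generic $u\in\mathcal{C}_\alpha$ the map $\xi\mapsto\varphi_\alpha(\xi,u)$ need only be bounded and may fail to decay. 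Invoking Lemma \ref{lem.Datko} with $s=t$ and the intermediate index equal to $t$, and then substituting $\Phi(t)u=u$, gives
\[
\varphi_\alpha(\xi,u)=\varphi_\alpha(\xi,\Phi(t)u)\leq e^{\alpha(\xi-t)}\varphi_\alpha(t,\Phi(t)u)=e^{\alpha(\xi-t)}\varphi_\alpha(t,u),\qquad \xi\geq t.
\]

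Next I would raise this inequality to the $p$-th power and integrate over $[t,\infty)$. Proposition \ref{p2} ensures that $\xi\mapsto\varphi_\alpha(\xi,u)$ is continuous, so the integrand is continuous and the improper integral is well defined. Because $\mathcal{U}$ is $\alpha$-nonuniform exponentially stable we have $\alpha<0$, hence $p\alpha<0$, and therefore
\[
\int_t^\infty \varphi_\alpha^p(\xi,u)\,d\xi\leq \varphi_\alpha^p(t,u)\int_t^\infty e^{p\alpha(\xi-t)}\,d\xi=-\frac{1}{p\alpha}\,\varphi_\alpha^p(t,u).
\]
Setting $K=-1/(p\alpha)>0$, a constant independent of both $t$ and $u$, establishes \eqref{eq.Datko}.

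I do not anticipate a genuine obstacle: the whole argument hinges on the correct specialization of the indices in Lemma \ref{lem.Datko} and on the identity $\Phi(t)u=u$ valid on the range of the projection, which together upgrade the semigroup-type inequality of the lemma into honest exponential decay of $\varphi_\alpha(\cdot,u)$ on $[t,\infty)$. After that the conclusion is a single elementary integration, with the value of $K$ dictated solely by $p$ and the stability exponent $\alpha$.
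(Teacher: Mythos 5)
Your proof is correct and is essentially the paper's own argument: both apply Lemma \ref{lem.Datko} with $s=\xi'=t$, use the fact that $u=\Phi(t)u$ on the range of the projection (the paper cites Eq.~\eqref{eq.1} for the same purpose), and integrate the resulting bound $e^{p\alpha(\xi-t)}$ to obtain the same constant $K=1/(-p\alpha)$. Your explicit remarks on continuity (via Proposition \ref{p2}) and on why the hypothesis $u\in\Phi(t)\mathcal{C}_\alpha$ is needed are sound additions but do not change the route.
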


\begin{proof}
Let $t\geq0$ and $u\in C(\mathbb{R}_{+},X)$. From \eqref{eq.D.part} we have
\[
\int_{t}^{\infty}\varphi_{\alpha}^{p}(\xi,\Phi(t)u)d\xi\leq\int_{t}^{\infty
}e^{p\alpha(\xi-t)}\varphi_{\alpha}^{p}(t,u)d\xi=1/({-p\alpha})\varphi
_{\alpha}^{p}(t,u),
\]
which concludes the proof.

\end{proof}

As in the proof of  (i)  $\Rightarrow$ (ii) in  Theorem \ref{prop.admisible.nes}, it is easy to check that if  an evolution family $\mathcal{U}$ is $\alpha
$-nonuniform exponentially stable, then
\begin{equation}\label{eq.restrict}
\Phi(t)\mathcal{C}_{b}(\mathcal{U},\alpha)\subset\mathcal{C}_{b}(\mathcal{U},\alpha), \text{  for every } t\geq 0,
\end{equation}
and thus
the restriction of the operator $\Phi(t)$ on  $\mathcal{C}_{b}(\mathcal{U},\alpha)$, denoted by $\Phi_\alpha(t)$, is a projection on
$\mathcal{C}_{b}(\mathcal{U},\alpha)$ for each  $t\geq 0$. Hence, from Proposition \ref{th.Datko.nec} we obtain the following necessary condition for the existence of $\alpha$-nonuniform exponential stability:

\begin{corollary}
Let $p\in (0,\infty)$. If $\mathcal{U}$ is $\alpha$-nonuniform exponentially stable, then
\[
\int_{t}^{\infty}\varphi_{\alpha}^{p}(\xi,u)d\xi<\infty, \text{ for all } t\geq 0 \text{ and } u\in\mathrm{Range}\left(\Phi_\alpha(t)\right).
\]
\end{corollary}

We now establish a partial converse of Proposition \ref{th.Datko.nec}, which can be
considered a Datko-type theorem for nonuniform exponential stability.

\begin{theorem}
\label{th.Datko.suf}
If there exist $\alpha\in\mathcal{A}(\mathcal{U})$ and
$p,K>0$ such that
\begin{equation}
\int_{t}^{\infty}\varphi_{\alpha}^{p}(\xi,\Phi(t)u)d\xi\leq K\varphi_{\alpha
}^{p}(t,u),\text{ for all }t\geq0\text{ and }u\in \mathcal{C}_{b}(\mathcal{U},\alpha),\label{eq.Datko.suf}%
\end{equation}
then  $\mathcal{U}$ is nonuniform exponentially stable.
\end{theorem}

\begin{proof}
It suffices to assume that $\alpha\geq0$. For the sake of convenience, we divide the proof of the theorem in four steps.\\
{\bf Step 1}. We first prove that there exists a constant $N>0$ such that
\begin{equation}
\varphi_{\alpha}^{p}(t,\Phi(s)u)\leq N\varphi_{\alpha}^{p}(s,u),
\;u\in\mathcal{C}_{b}(\mathcal{U},\alpha), \; t\geq s\geq0. \label{eq.D1}%
\end{equation}

The arguments in this step closely follow a classical idea of Ichikawa
\cite{Ic.1984}. At least for a better understanding, we prefer exposing all
the details. Let $t\geq s\geq0$. If $t\geq s+1$, then from \eqref{eq.D} and
\eqref{eq.Datko.suf} we have
\begin{align*}
\varphi_{\alpha}^{p}(t,\Phi(s)u)  &  =\int_{t-1}^{t}\varphi_{\alpha}%
^{p}(t,\Phi(s)u)d\xi\leq\int_{t-1}^{t}e^{p\alpha(t-\xi)}\varphi_{\alpha}%
^{p}(\xi,\Phi(s)u)d\xi\\
&  \leq e^{p\alpha}\int_{s}^{\infty}\varphi_{\alpha}^{p}(\xi,\Phi(s)u)d\xi\leq
e^{p\alpha}K\varphi_{\alpha}^{p}(s,u).
\end{align*}
On the other hand, if $s\leq t<s+1$, then equation \eqref{eq.D.part} gives
\[
\varphi_{\alpha}^{p}(t,\Phi(s)u)\leq e^{p\alpha(t-s)}\varphi_{\alpha}%
^{p}(s,u)\leq e^{p\alpha}\varphi_{\alpha}^{p}(s,u).
\]
Consequently, we get \eqref{eq.D1} for  $N=\max\left\{  e^{p\alpha}K,e^{p\alpha}\right\}  $.

\medskip

Let us notice that inequality \eqref{eq.D1} implies
$$\Phi(s)\mathcal{C}_{b}(\mathcal{U},\alpha)\subset\mathcal{C}_{b}(\mathcal{U},\alpha), \text{  for every } s\geq 0.$$
Furthermore, according to Theorem \ref{prop.admisible.nes} we get that $\mathcal{U}$ is nonuniformly stable.\\
{\bf Step 2}. We show that for all non-negative integers $n\in\mathbb{N}$,
\begin{equation}
\varphi_{\alpha}^{p}(t,\Phi(s)u)\frac{(t-s)^{n}}{n!N^{n}}\leq N\varphi
_{\alpha}^{p}(s,u), \; u\in\mathcal{C}_{b}(\mathcal{U},\alpha), \; t>s\geq0. \label{eq.D2}%
\end{equation}

We prove inequality \eqref{eq.D2} by induction on $n$. {Step 1} shows that it already
works for $n=0$. Let us assume that \eqref{eq.D2} holds for some $n\in\mathbb{N}$.
Let $u\in\mathcal{C}_{b}(\mathcal{U},\alpha)$ and $t>s\geq0$. According to \eqref{eq.D2}, for
every $\xi\in[s,t)$ we have
\[
\varphi_{\alpha}^{p}(t,\Phi(\xi)u)\frac{(t-\xi)^{n}}{n!N^{n}}\leq
N\varphi_{\alpha}^{p}(\xi,u).
\]
Replacing $u$ by $\Phi(s)u\in \mathcal{C}_{b}(\mathcal{U},\alpha)$ in the above inequality and using \eqref{eq.pre.D} we obtain
\begin{equation}
\varphi_{\alpha}^{p}(t,\Phi(s)u)\frac{(t-\xi)^{n}}{n!N^{n}}\leq N\varphi
_{\alpha}^{p}(\xi,\Phi(s)u). \label{eq.D3}%
\end{equation}
From \eqref{eq.D3} and \eqref{eq.Datko.suf} we get
\begin{align*}
\varphi_{\alpha}^{p}(t,\Phi(s)u) \frac{(t-s)^{n+1}}{(n+1)!N^{n+1}}  &
=\frac{1}{N}\int_{s}^{t}\varphi_{\alpha}^{p}(t,\Phi(s)u)\frac{(t-\xi)^{n}%
}{n!N^{n}}d\xi\leq\int_{s}^{t}\varphi_{\alpha}^{p}(\xi,\Phi(s)u)d\xi\\
&  \leq\int_{s}^{\infty}\varphi_{\alpha}^{p}(\xi,\Phi(s)u)d\xi\leq
K\varphi_{\alpha}^{p}(s,u)\leq N\varphi_{\alpha}^{p}(s,u),
\end{align*}
and so  (\ref{eq.D2}) holds for $n+1$. \\
{\bf Step 3}. For any fixed $\delta\in(0,1)$, there exists a constant
$\widetilde{N}>0$ such that
\begin{equation}
\parallel U(t,s)u(s)\parallel\leq\widetilde{N}e^{-\frac{\delta}{pN}%
(t-s)}\varphi_{\alpha}(s,u), \; u\in\mathcal{C}_{b}(\mathcal{U},\alpha), \; t\geq s\geq0.
\label{eq.D4}%
\end{equation}
Let $u\in\mathcal{C}_{b}(\mathcal{U},\alpha)$, $t>s\geq0$ and pick $\delta\in(0,1)$.
Multiplying inequality \eqref{eq.D2} by $\delta^{n}$ and summing over
$n\in\mathbb{N}$, one gets
\[
\varphi_{\alpha}^{p}(t,\Phi(s)u)\sum\limits_{n=0}^{\infty}\frac{1}{n!}\left[
\frac{\delta(t-s)}{N}\right]  ^{n}\leq N\varphi_{\alpha}^{p}(s,u)\sum
\limits_{n=0}^{\infty}\delta^{n},
\]
which is equivalent to
\[
\varphi_{\alpha}(t,\Phi(s)u)\leq\left(  \frac{N}{1-\delta}\right)
^{1/p}e^{-\frac{\delta}{pN}(t-s)}\varphi_{\alpha}(s,u).
\]
Since
\[
\parallel U(t,s)u(s)\parallel\leq\sup_{\tau\geq t}e^{-\alpha(\tau-t)}\parallel
U(\tau,s)u(s)\parallel=\varphi_{\alpha}(t,\Phi(s)u),\;t\geq s\geq0,
\]
we deduce that (\ref{eq.D4}) works with $\widetilde{N}=1+\left(
{N}/{(1-\delta)}\right)  ^{1/p}$.\\
{\bf Step 4}. For any fixed $\delta\in(0,1)$, the following estimation
holds:
\begin{equation}
\parallel U(t,s)\parallel\leq\widetilde{N}M(s)e^{-\frac{\delta}{pN}%
(t-s)}, \; t\geq s\geq0. \label{eq.D5}%
\end{equation}

For any  fixed $x\in X$ and $s\geq 0$, we consider a map $u_{s,x}\in {C}_{c}(\mathbb{R}_+,X)$ such that $u_{s,x}(s)=x$.
From {Step 3}  and inequalities \eqref{eq.init} we have that for any $\delta\in (0,1)$ there exists $\widetilde{N}>0$ such that
$$\| U(t,s)x\|
=\|U(t,s)u_{s,x}(s)\|
\leq \widetilde{N}e^{-\frac{\delta}{pN}(t-s)}\varphi_{\alpha}(s,u_{s,x})
\leq \widetilde{N}M(s)e^{-\frac{\delta}{pN}(t-s)}\|x\|,$$
for all $t\geq s\geq 0$, which proves that $-\frac{\delta}{pN}\in \mathcal{A}(\mathcal{U})$, thus the evolution family
$\mathcal{U}$ is nonuniform exponentially stable.

\end{proof}

At this point we are able to give a necessary and sufficient condition for the  existence of nonuniform exponential stability.
\begin{corollary}
An evolution family $\mathcal{U}$ is nonuniform exponentially stable if and only if
there exist $\alpha\in\mathcal{A}(\mathcal{U})$ and $p,K>0$ such that
\begin{equation*}
\int_{t}^{\infty}\varphi_{\alpha}^{p}(\xi,\Phi(t)u)d\xi\leq K\varphi_{\alpha
}^{p}(t,u),\text{ for all }t\geq0\text{ and }u\in \mathcal{C}_{b}(\mathcal{U},\alpha).\label{eq.Datko.final}
\end{equation*}
\end{corollary}
The above result is the analog of Theorem 9 in \cite{Dr2} in our context.

\begin{remark}
\label{rem2}From the above arguments (see {Step 4} in the proof of Theorem \ref{th.Datko.suf}) we deduce that
$\mathcal{C}_{b}(\mathcal{U},\alpha)$ can be replaced by any  Banach  space of continuous functions $u:\mathbb{R}_+\to X$,  containing  $C_c(\mathbb{R}_{+},X)$.
Let us notice that, in fact,  the paper \cite{Dr2} treated the
case of constant functions $u:\mathbb{R}_+\to X$ (identified with $X$).
Since $C_c(\mathbb{R}_{+},X)$ is not contained in the space of constant functions $u:\mathbb{R}_+\to X$,
we emphasize that our main results do not imply those in \cite{Dr2}, as well as the ones in \cite{Dr2} do not imply ours.
\end{remark}

\section*{Acknowledgments}

The first author was supported by the research grant GNaC2018-ARUT, no. 1360/01.02.2019.


\begin{thebibliography}{99}                                                                                               %


\bibitem {Be.Lu.Me.Si}A.J.G. Bento, N. Lupa, M. Megan, and C.M. Silva,
Integral conditions for nonuniform $\mu$-dichotomy on the half-line, Discrete
Contin. Dyn. Syst. Ser. B 22, 3063--3077 (2017).

\bibitem {Ba.Pe}{L. Barreira, Ya. Pesin}, {Lyapunov Exponents and Smooth
Ergodic Theory}, Univ. Lecture Ser. {23}, Amer. Math. Soc., Providence, RI. (2002).

\bibitem {Ba.Va.2008-1}{L. Barreira, C. Valls}, {Stability of Nonautonomous
Differential Equations}, Lecture Notes in Math., Vol. {1926}, Springer (2008).


\bibitem {Ch.La.1999}{C. Chicone, Yu. Latushkin}, {Evolution Semigroups in
Dynamical Systems and Differential Equations}, Math. Surveys Monogr. {70},
Amer. Math. Soc., Providence, RI. (1999).


\bibitem {Da.1970}{R. Datko}, {Extending a theorem of A.M. Liapunov to Hilbert
space}, J. Math. Anal. Appl. {32}, 610--616 (1972).

\bibitem {Da.1972}R. Datko, {Uniform asymptotic stability of evolutionary
processes in a Banach space}, SIAM J. Math. Anal. {3}, 428--445 (1972).

\bibitem {Dr1}D. Dragi\v{c}evi\'{c}, {Datko-Pazy conditions for nonuniform
exponential stability}, J. Difference Equ. Appl. 24, 344--357 (2018).

\bibitem {Dr2}D. Dragi\v{c}evi\'{c}, {Strong nonuniform behaviour: A Datko
type characterization}, J. Math. Anal. Appl. 459, 266--290 (2018).

\bibitem {Ic.1984}{A. Ichikawa}, {Equivalence of $L_{p}$ stability and
exponential stability for a class of nonlinear semigroups}, Nonlinear Anal.
{8}, 805--815 (1984).

\bibitem {Lu.Me.2014}N. Lupa, M. Megan, Exponential dichotomies of evolution
operators in Banach spaces, Monatsh. Math. 174, 265--284 (2014).

\bibitem {Lu.Me.Po}N. Lupa, M. Megan, and I.-L. Popa, On weak exponential
stability of evolution operators in Banach spaces, Nonlinear Anal. 73,
2445--2450 (2010).

\bibitem {Lu.Po}N. Lupa, L.H. Popescu, Admissible Banach function spaces for
linear dynamics with nonuniform behavior on the half-line, Semigroup Forum 98,
184--208 (2019).

\bibitem {Mi.Ra.Sc.}N. Van Minh, F. Rabiger, R. Schnaubelt, Exponential
stability, exponential expansiveness, and exponential dichotomy of evolution
equations on the half-line, Int. Eqs. Op. Theory 32 (1998).

\bibitem {Pa.1972}{A. Pazy}, {On the applicability of Lyapunov' s theorem in
Hilbert space}, SIAM J. Math. Anal. {3}, 291--294 (1972).

\bibitem {Pa.1983}{A. Pazy}, {Semigroups of Linear Operators and Applications
to Partial Differential Equations}, Appl. Math. Sci., Vol. {44}, Springer (1983).

\bibitem {Pr.Me.1985}{P. Preda, M. Megan}, {Exponential dichotomy of
evolutionary processes in Banach spaces}, Czechoslovak Math. J. {35}, 312--323 (1985).


\bibitem {Pr.Pr.Cr.2012}C. Preda, P. Preda, and A. Craciunescu, {A version of
a theorem of R. Datko for nonuniform exponential contractions}, J. Math. Anal.
Appl. 385, 572--581 (2012).

\bibitem {Sa.2010-1}{B. Sasu}, {Integral conditions for exponential dichotomy:
A nonlinear approach}, Bull. Sci. Math. {134}, 235--246 (2010).

\bibitem {Sa.Sa.2011}{B. Sasu, A.L. Sasu}, {Nonlinear criteria for the
existence of the exponential trichotomy in infinite dimensional spaces},
Nonlinear Anal. 74, 5097--5110 (2011).
\end{thebibliography}
\end{document}